\def \fg{{\mathfrak g}}
\def \fa{{\mathfrak a}}
\def \fn{{\mathfrak n}}
\def \fq{{\mathfrak q}}
\def \u{{\mathfrak u}}
\def \C{{\mathbb C}}
\def \R{{\mathbb R}}
\newtheorem{theorem}{Theorem}[section]
\newtheorem{lemma}[theorem]{Lemma}
\newtheorem{proposition}[theorem]{Proposition}
\numberwithin{equation}{section}
\begin{document}

\baselineskip=16pt

\title[A note on Iwasawa-type decomposition]{A note on Iwasawa-type decomposition}

\author[P. Foth]{Philip Foth}

\address{CEGEP Champlain St. Lawrence, Qu\'ebec, Canada G1V 4K2 \newline 
\indent Department of Mathematics, University of Arizona, Tucson, AZ 85721-0089}

\email{phfoth@gmail.com}
\subjclass{Primary 15A23, secondary 53D17, 15A18.}
\keywords{Eigenvalue, pseudo-unitary, admissible element, Iwasawa decomposition.}

\date{September 23, 2010}

\begin{abstract}
We study the Iwasawa-type decomposition of an open subset of ${\rm SL}(n, \C)$ as ${\rm SU}(p,q)AN$. We show 
that the dressing action of ${\rm SU}(p,q)$ is globally defined on the space of admissible elements in $AN$. 
We also show that the space of admissible elements is a multiplicative subset of $AN$. We establish a geometric 
criterion: the symmetrization of an admissible element maps the positive cone in $\C^n$ into itself.
\end{abstract}

\maketitle

\section{Introduction}
In Poisson geometry the groups ${\rm SU}(p,q)$ and $AN$ (the upper-triangular subgroup of ${\rm SL}(n,\C)$ with real positive diagonal entries) are naturally dual to each other \cite{FL}. Therefore, it is important to know the geometry of the orbits of the dressing action. We show that the right dressing action of ${\rm SU}(p,q)$ is globally defined on the open subset of the so-called admissible elements of $AN$ (see next section).

We also show that the admissible elements can be characterized as follows: these are exactly those elements of $AN$
whose symmetrization maps the closure of the positive cone in $\C^n$ into the positive cone. In addition, we 
establish a useful fact that the set of admissible elements is a multiplicative subset of $AN$.

\section{Admissible elements}

Let $p$ and $q$ be positive integers and let $n=p+q$. Consider the space $\C^n$ and the group $G_0={\rm SU}(p,q)$,
The group $G_0$ is the subgroup of $G={\rm SL}(n, \C)$, which preserves the following sesquilinear pairing on $\C^n$:
$$
\langle{\bf x}, {\bf y}\rangle = \sum_{i=1}^px_i{\bar y}_i - \sum_{j=p+1}^nx_j{\bar y}_j .
$$
Denote the corresponding norm by $||{\bf x}||$. A vector ${\bf x}\in\C^n$ is said to be \emph{timelike} if the square of its norm is positive, and \emph{spacelike}, if it is negative.

Let $A\subset G$ be the subgroup of positive real diagonal matrices and $N$ be the unipotent upper triangular subgroup.
Denote by $\fg$, $\fa$, $\fn$, and $\fg_0$ the Lie algebras of $G$, $A$, $N$, and $G_0$.   
The Iwasawa-type decomposition for $G$ states that for 
an open dense subset $G^{\vee}$ of $G$ one has \cite[p.167]{Vil3}:
$$
G^{\vee} = \coprod_{w\in W/W_0} G_0{\dot{w}}AN, 
$$
where $W=W(\fg, \fa)$ is the Weyl group, $W_0$ is the subgroup of $W$ with representatives in 
$K = {\rm SU}(n)\cap G_0$ and $\dot{w}$ is a representative of $w\in W/W_0$ in ${\rm SU}(n)$. 

An important question is to determine which elements of $G$ allow such a decomposition for a given choice of $w$. 
The case of particular interest is when $w=1$, since it is related to the dressing action in Poisson geometry. 

Let $J_{p,q}$ be the diagonal matrix $J_{p,q}=\rm{diag}(\underbrace{1,1,...,1}_p, \underbrace{-1,-1,...-1}_q)$. 
Introduce the following involution on the space of $n\times n$ matrices: $$A^{\dagger}=J_{p,q}A^*J_{p,q}\ ,$$ where 
$A^*$ is the usual conjugate transpose. The Lie algebra $\fg$ splits, as a vector space, into the direct sum
of $\pm 1$-eigenspaces of $\dagger$: $ \fg = \fq + \fg_0$. Let also $Q\subset G$ be the submanifold of elements   
satisfying $A^\dagger = A$. Clearly, $\exp(\fq)\subset Q$. 

The next definition is quintessential for this paper. 
We say that $${\vec{\lambda}}={\rm diag}(\lambda_1, ..., \lambda_p, \mu_1, ..., \mu_q)\subset\fa$$ 
is {\it admissible} if $\lambda_i> \mu_j$. Clearly , using the action of $W_0$, one can arrange $\lambda_i$'s and $\mu_j$'s in the non-increasing order, and the condition of admissibility will become simply $\lambda_p > \mu_1$. Same 
definition applies for $A$. Next, we say that an element $X\in\fq$ is admissible, if it is $G_0$-conjugate to 
an admissible element in $\fa$. The set of admissible elements in $\fq$ form an open cone. Define the admissible
elements in $Q$ as the exponents of those in $\fq$. Finally, an element $b\in AN$ is called admissible, if it obtained from an admissible element of $A$
by the right dressing action. 

The above definition stems from the work of Hilgert, Neeb and others, see e.g. \cite{Neeb}. Recall the definition of the right dressing action of $G_0$ on $AN$. Let $b\in AN$ and $g\in G_0$. Assume there exist $b'\in AN$ and $g'\in G_0$ such that $bg=g'b'$. In this case we write $b'=b^g$.   

One of the important properties of the set of admissible elements can be observed in the following result, which asserts that admissible elements map the timelike cone into itself. 

\begin{lemma}
Let $s\in Q$ be admissible and let ${\bf x}\in\C^n$ be timelike. Then $s{\bf x}$ is timelike as well.
\label{l:l21}
\end{lemma}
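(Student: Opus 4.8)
The plan is to reduce the assertion to the special case where $s$ is diagonal, using the fact that ${\rm SU}(p,q)$ preserves the pairing $\langle\cdot,\cdot\rangle$, and then to check an elementary inequality.

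First I would unwind the definitions. Since $s\in Q$ is admissible, $s=\exp(X)$ for some admissible $X\in\fq$, and $X$ being admissible means $X=g_0\vec{\lambda}g_0^{-1}$ for some $g_0\in G_0$ and some admissible $\vec{\lambda}={\rm diag}(\lambda_1,\dots,\lambda_p,\mu_1,\dots,\mu_q)\in\fa$, i.e.\ with $\lambda_i>\mu_j$ for all $i,j$. Consequently $s=g_0\,d\,g_0^{-1}$ with $d=\exp(\vec{\lambda})={\rm diag}(e^{\lambda_1},\dots,e^{\lambda_p},e^{\mu_1},\dots,e^{\mu_q})$, a diagonal matrix with positive real entries. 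Next, set ${\bf y}=g_0^{-1}{\bf x}$. Because $g_0\in{\rm SU}(p,q)$ preserves the form, one has $\langle{\bf x},{\bf x}\rangle=\langle g_0{\bf y},g_0{\bf y}\rangle=\langle{\bf y},{\bf y}\rangle$ and, likewise, $\langle s{\bf x},s{\bf x}\rangle=\langle g_0 d{\bf y},g_0 d{\bf y}\rangle=\langle d{\bf y},d{\bf y}\rangle$. Hence the lemma is equivalent to the implication: if $\langle{\bf y},{\bf y}\rangle>0$ then $\langle d{\bf y},d{\bf y}\rangle>0$.

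Finally I would prove this implication directly. One computes
$$
\langle d{\bf y},d{\bf y}\rangle=\sum_{i=1}^p e^{2\lambda_i}|y_i|^2-\sum_{j=1}^q e^{2\mu_j}|y_{p+j}|^2 .
$$
Let $\alpha=\min_i\lambda_i$ and $\beta=\max_j\mu_j$; admissibility gives $\alpha>\beta$. Since $e^{2\lambda_i}\ge e^{2\alpha}$ and $e^{2\mu_j}\le e^{2\beta}$, the right-hand side is bounded below by $e^{2\alpha}\sum_{i\le p}|y_i|^2-e^{2\beta}\sum_{j\le q}|y_{p+j}|^2$. The hypothesis says $\sum_{i\le p}|y_i|^2>\sum_{j\le q}|y_{p+j}|^2\ge 0$; if the latter sum vanishes the bound is clearly positive, and otherwise $e^{2\alpha}\sum_{i\le p}|y_i|^2>e^{2\alpha}\sum_{j\le q}|y_{p+j}|^2>e^{2\beta}\sum_{j\le q}|y_{p+j}|^2$, so again it is positive.

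I do not expect a genuine obstacle here: this is essentially a warm-up observation, and the only point requiring a moment's care is that conjugating $d$ by $g_0$ interacts correctly with $\langle\cdot,\cdot\rangle$ — which it does precisely because $g_0$ lies in $G_0$, the stabilizer of the form. The geometric content is simply that a positive diagonal matrix which dilates the timelike coordinates by a factor strictly larger than the one by which it dilates the spacelike coordinates cannot push a timelike vector out of the timelike cone.
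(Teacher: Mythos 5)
Your proof is correct and follows essentially the same route as the paper's: reduce to the diagonal case by conjugating with an element of $G_0$ (which preserves the form), then verify that the diagonal matrix preserves the timelike cone because every scaling factor on the timelike coordinates exceeds every scaling factor on the spacelike coordinates. The paper phrases the final estimate by adding and subtracting $e^{2\mu_1}\|{\bf y}\|^2$ rather than by bounding with $\min_i\lambda_i$ and $\max_j\mu_j$, but this is only a cosmetic difference.
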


\begin{proof} Decompose $s=g^{-1}e^{\vec{\lambda}}g$, where $g\in G_0$ and 
${\vec{\lambda}}={\rm diag}(\lambda_1, ..., \lambda_p, \mu_1, ..., \mu_q)\subset\fa$
is admissible, so that we have $\lambda_i > \mu_1 \ge \mu_j$ for all $1\le i\le p$ and $2\le j\le q$.  

Let ${\bf y}=g{\bf x}$. Since the group $G_0$ preserves the cone of timelike elements, ${\bf y}$ is 
timelike as well and, denoting its coordinates
$${\bf y}=(z_1, ..., z_p, w_1, ..., w_q) ,$$ we see that 
$$
||{\bf y}||^2 = \sum_{i=1}^p |z_i|^2 - \sum_{j=1}^q |w_j|^2 = r, \ \ r\in\R, \ \ r >0 \ . 
$$

Now, let us show that $e^{\vec{\lambda}}{\bf y}$ is timelike, by using the above equation and expressing 
$|w_1|^2$ in terms of the other coordinates:
$$
\sum_{i=1}^p e^{2\lambda_i}|z_i|^2 - \sum_{j=1}^q e^{2\mu_j}|w_j|^2 = 
$$
$$
= \sum_{i=1}^p(e^{2\lambda_i}-e^{2\mu_1})|z_i|^2 + \sum_{j=2}^q(e^{2\mu_1}-e^{2\mu_j})|w_j|^2 \
+ \ re^{2\mu_1}\ >\ 0 \ .
$$
The last step in the proof is to notice that $g^{-1}$ prserves the timelike cone, and thus 
$s{\bf x}=g^{-1}e^{\vec{\lambda}}{\bf y}$ is timelike. 
\end{proof}

Later on, in Proposition \ref{p:p661} we will show that conversely, admissible elements 
can be characterized by this property.

\section{Dressing action and admissible elements}

Here we will give an excplicit computational indication
that if ${\vec{\lambda}}\in \fa$ is admissible, then the whole orbit $\exp({\vec{\lambda}})G_0$ admits a global decomposition, i.e. $\exp({\vec{\lambda}})G_0 \subset G_0AN$. Therefore, the right 
dressing action is globally defined on the set of admissible elements in $AN$. Note that this is not true for the
left dressing action, as a simple $2\times 2$ example can show. 

Next, we will use the Gram-Schmidt orthogonalization process to show that for any $g\in G_0$ and admissible $\vec{\lambda}$, we have  $\exp({\vec{\lambda}})g \subset G_0AN$. A short proof of this will be given in
the next section. Let us denote the columns of $g$ by 
${\bf w}_1$, ..., ${\bf w}_n$.  The columns of $g$ are (pseudo) orthonormal with respect to $\langle\cdot,\cdot\rangle$, and the first $p$ columns are timelike, and the last $q$ are spacelike. 
       
Denote the columns of $\exp({\vec{\lambda}})g$ by ${\bf v}_1$, ..., ${\bf v}_n$. This element of $G$ is obtained from $g$ by multiplying the first row by $e^{\lambda_1}$, ..., the $p$-th row by $e^{\lambda_p}$, the $(p+1)$-st row by 
$e^{\mu_1}$, ..., and the last row by $e^{\mu_q}$. 

An important observation is that due to the admissibility of ${\vec{\lambda}}$, the first $p$ columns of 
$\exp({\vec{\lambda}})g$ will remain timelike, however nothing definite can be said about the last $q$. 

The decomposition $\exp({\vec{\lambda}})g = sb$ with $s\in G_0$ and $b\in AN$ is an analogue of the Gram-Schmidt 
orthogonalization process for the pseudo-metric $\langle\cdot,\cdot\rangle$. 

Denote the columns of $s$ by ${\bf u}_1$, ..., ${\bf u}_n$. Proving the existence of such decomposition amounts to showing that if we follow the Gram-Schmidt process, the first $p$ columns of $s$ will be timelike, and the last $q$ will be spacelike, and that the diagonal entries of $b$ will be positive real numbers. Let us denote the diagonal entries of $b$ by $(r_1, ..., r_n)$ and the off-diagonal by $m_{ij}$, where $m_{ij}=0$ for $i > j$. 

Consider the first step of the Gram-Schmidt process, namely that $r_1=||{\bf v}_1||$ and 
${\bf u}_1 = {\bf v}_1/r_1$. Since ${\bf v}_1$ is timelike, we see that $r_1$ is real positive and that
${\bf u}_1$ is timelike. 

Now we move to the second step, (obviously, under the assumption that $p>1$), 
which we consider in detail, because it lays the foundation for the other columns: 
$$
{\bf v}_2 = m_{12}{\bf u}_1 + r_2{\bf u}_2. 
$$ 
Here $m_{12}=\langle {\bf v}_2, {\bf u}_1\rangle$ and $r_2 = ||{\bf v}_2 - m_{12}{\bf u}_1||$. In order to complete
this step we need to show that the vector ${\bf v}_2 - m_{12}{\bf u}_1$ is timelike. 

Note that 

$$
||{\bf v}_2 - m_{12}{\bf u}_1||^2 = ||{\bf v}_2||^2 - |\langle {\bf v}_2, {\bf u}_1\rangle |^2 , 
$$
so we just need to show that this number is positive. 
Since $||{\bf u}_1||^2 = 1$, this is equivalent to showing that 
$$
||{\bf v}_2||^2\cdot ||{\bf u}_1||^2 > |\langle {\bf v}_2, {\bf u}_1\rangle |^2 , 
$$
or, after multiplying both sides by $r_1^2$, that
\begin{equation}
||{\bf v}_2||^2\cdot ||{\bf v}_1||^2 > |\langle {\bf v}_2, {\bf v}_1\rangle |^2 
\label{eq:ine2}
\end{equation}

Denote the coordinates of the 
vector ${\bf w}_1$ by $(a_1, .., a_p, b_1, ..., b_q)$, and the coordinates of 
${\bf w}_2$ by $(c_1, .., c_p, d_1, ..., d_q)$. 
We have:
\begin{equation}
\sum_{i=1}^p |a_i|^2- \sum_{j=1}^q |b_j|^2 = \sum_{i=1}^p |c_i|^2- \sum_{j=1}^q |d_j|^2 =1
\label{eq:norm1}
\end{equation}
and 
\begin{equation}
\sum_{i=1}^p a_i{\bar c}_i - \sum_{j=1}^q b_j{\bar d}_j = 0. 
\label{eq:dot0}
\end{equation}

The coordinates of the vector ${\bf v}_1$ are given by 
$$
(e^{\lambda_1}a_1, ..., e^{\lambda_p}a_p, e^{\mu_1}b_1, ..., e^{\mu_q}b_q).
$$
and ${\bf v}_2$ by 
$$
(e^{\lambda_1}c_1, ..., e^{\lambda_p}c_p, e^{\mu_1}d_1, ..., e^{\mu_q}d_q).
$$

The RHS of Equation (\ref{eq:ine2}) now becomes, using (\ref{eq:norm1}):
$$
\left( \sum_{i=1}^p e^{2\lambda_i}|a_i|^2 - \sum_{j=1}^q e^{2\mu_j}|b_j|^2 \right)
\cdot
\left( \sum_{i=1}^p e^{2\lambda_i}|c_i|^2 - \sum_{j=1}^q e^{2\mu_j}|d_j|^2 \right)
=
$$

$$
=\left( 
\sum_{i=1}^p (e^{2\lambda_i}-e^{2\mu_1})|a_i|^2 +
\sum_{j=2}^q (e^{2\mu_1}-e^{2\mu_j})|b_j|^2 + e^{2\mu_1}
\right) \cdot
$$

$$\cdot\left( 
\sum_{i=1}^p (e^{2\lambda_i}-e^{2\mu_1})|c_i|^2 +
\sum_{j=2}^q (e^{2\mu_1}-e^{2\mu_j})|d_j|^2 + e^{2\mu_1}
\right) ,
$$
which is strictly greater than
$$
\left( 
\sum_{i=1}^p (e^{2\lambda_i}-e^{2\mu_1})|a_i|^2 +
\sum_{j=2}^q (e^{2\mu_1}-e^{2\mu_j})|b_j|^2
\right) \cdot\left( 
\sum_{i=1}^p (e^{2\lambda_i}-e^{2\mu_1})|c_i|^2 +
\sum_{j=2}^q (e^{2\mu_1}-e^{2\mu_j})|d_j|^2
\right) ,
$$
which in turn, by Cauchy-Schwarz, is greater or equal than 
$$
|\sum_{i=1}^p (e^{2\lambda_i}-e^{2\mu_1})a_i{\bar c}_i + 
\sum_{j=2}^q (e^{2\mu_1}-e^{2\mu_j})b_j{\bar d}_j |^2, 
$$
which is exactly $|\langle {\bf v}_1, {\bf v}_2 \rangle |^2$, if we take into account
(\ref{eq:dot0}). 

Now we will similarly show that ${\bf u}_k$ is timelike for any $k \le p$. We have 
$$
{\bf v}_k = \sum_{\ell=1}^{k-1} m_{\ell k}{\bf u}_\ell + r_k {\bf u}_k, 
$$
where $m_{\ell k} = \langle {\bf v}_k, {\bf u}_\ell\rangle$. 

This amounts to showing that
$$
||{\bf v}_k||^2 > \sum_{\ell=1}^{k-1} |\langle {\bf v}_k , {\bf u}_{\ell} \rangle | ^2  
$$ 

Consider the subspace $V\subset \C^n$ spanned by ${\bf u}_1, ..., {\bf u}_{k-1}$, which is the 
same subspace that is spanned by ${\bf v}_1$, ..., ${\bf v}_{k-1}$. It is isomorphic to the subspace
$W$ spanned by ${\bf w}_1$, ..., ${\bf w}_{k-1}$, and the explicit isomorphism is given
by multiplying the first coordinate by $e^{-\lambda_1}$, ..., the $p$-th by $e^{-\lambda_p}$, 
the $(p+1)$-st by $e^{-\mu_1}$, ...., and the last by $e^{-\mu_q}$. 

Denote by ${\bf y}_k= \sum_{\ell=1}^{k-1}m_{\ell k}{\bf u}_{\ell}$
If $\langle {\bf v}_k,  {\bf y}_k \rangle =0$, then by definition of ${\bf y}_k$ and $m_{\ell k}$ 
this would imply that ${\bf v}_k$ is orthogonal to all the ${\bf u}_i$ for $1\le i\le k-1$, and thus 
all $m_{\ell k} = 0$ and, similar to the case $k=1$, the vector 
${\bf u}_k = {\bf v}_k/|{\bf v}_k|$ is timelike as desired. 

Thus we can assume that 
$$ 
\alpha_k = \langle {\bf v}_k,  {\bf y}_k \rangle = \sum_{\ell=1}^{k-1} |m_{\ell k}|^2 > 0 \ .
$$
 
Define 
$$
{\bf u} = \frac{{\bf y}_k}{\sqrt{\alpha_k}}\ .
$$
This is an essentially unique element of $V$ (up to a circle factor) with the property that: 
$$
\langle {\bf v}_k, {\bf u} \rangle \cdot {\bf u} = \sum_{\ell=1}^{k-1} m_{\ell k}{\bf u}_\ell .
$$
Let also ${\bf w}$ be the unit vector in $W$, which is the rescaled image of ${\bf u}$ under the above
homomorphism. The previous proof for the case $k=2$ now applies verbatim, with the role 
of ${\bf w}_1$ played by ${\bf w}$ and the role of ${\bf v}_2$ played by ${\bf v}_k$. 

Thus we have established that the first $p$ columns of $s$ are timelike vectors, and that $r_1$, ..., $r_p$
are positive real numbers. Due to the Sylvester's law of inertia for quadratic forms, 
the only way to complete this to a (pseudo) orthonormal basis 
with respect to $\langle\cdot,\cdot\rangle$ is to add $q$ spacelike vectors - therefore if we continue the 
Gram-Schmidt orthogonalization process, then the last $q$ columns of $s$ will be spacelike as required.

\medskip

\section{Iwasawa-type decomposition}

Let us introduce more notation. Denote by  $Q_{\rm adm}$ and $\fq_{\rm adm}$ the sets of admissible elements in 
$Q$ and $\fq$ respectively. Recall that $Q_{\rm adm}=\exp(\fq_{\rm adm})$. The following result is straightforward. 

\begin{lemma} On these sets of admissible elements
the map $\exp$ is a diffeomorphism.
\end{lemma}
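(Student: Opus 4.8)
The plan is to show that $\exp$, restricted to $\fq_{\rm adm}$, is a bijection onto $Q_{\rm adm}$, and that it (together with its inverse) is smooth. Smoothness of $\exp$ itself is automatic, so the content is injectivity, surjectivity, and smoothness of the inverse, which by the inverse function theorem reduces to showing the differential $d\exp_X$ is invertible at every admissible $X\in\fq$.

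First I would dispose of surjectivity, which is true by construction: $Q_{\rm adm}$ was \emph{defined} as $\exp(\fq_{\rm adm})$. So the real work is injectivity and regularity of the differential. For injectivity, suppose $\exp X = \exp Y$ with $X, Y \in \fq_{\rm adm}$. Each is $G_0$-conjugate to an admissible diagonal element of $\fa$; conjugating, it suffices to understand the situation where $\exp X$ and $\exp Y$ are both $G_0$-conjugate to exponentials of admissible elements of $\fa$. The key point is that an admissible $\vec\lambda\in\fa$ has all its entries real, hence $\exp\vec\lambda$ is a positive-definite-spectrum symmetric (with respect to $\dagger$) matrix whose logarithm is unique among $\dagger$-symmetric matrices: the eigenvalues $e^{\lambda_i}, e^{\mu_j}$ are positive reals, so the spectral logarithm is the only $X\in\fq$ with $\exp X$ having that value, because $\fq$ consists of $\dagger$-symmetric matrices and on such matrices $\exp$ restricted to the real-spectrum locus is injective by the usual functional-calculus argument. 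One then needs that the eigenvalues of an admissible element of $Q$ determine it up to $G_0$-conjugacy in a way compatible with $\fq$, so that $X$ and $Y$, having the same exponential, have the same eigenvalues and the same $\dagger$-symmetric logarithm, forcing $X = Y$.

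For the differential, recall $d\exp_X = (\text{left translation})\circ \frac{1-e^{-\operatorname{ad}X}}{\operatorname{ad}X}$, so $d\exp_X$ fails to be invertible on $\fg$ exactly when $\operatorname{ad}X$ has an eigenvalue in $2\pi i\,\mathbb Z\setminus\{0\}$. For $X$ admissible, $X$ is $G_0$-conjugate to a \emph{real} diagonal matrix $\vec\lambda$, so $\operatorname{ad}X$ has only real eigenvalues (differences $\lambda_i - \lambda_j$, etc.), none of which is a nonzero purely imaginary number. Hence $d\exp_X$ is invertible on all of $\fg$, and in particular its restriction to the subspace $\fq$ is injective; since $\exp(\fq_{\rm adm})\subset Q_{\rm adm}$ and both have the same dimension as $\fq_{\rm adm}$, this restriction is an isomorphism onto the tangent space of $Q_{\rm adm}$. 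Combined with bijectivity, the inverse function theorem gives that $\exp$ is a diffeomorphism.

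The main obstacle I anticipate is not analytic but bookkeeping: making precise that on $Q_{\rm adm}$ one genuinely has a well-defined "$\dagger$-symmetric logarithm" and that it lands in $\fq_{\rm adm}$ rather than merely in $\fq$ — i.e.\ that the admissibility condition (a \emph{strict} spectral-gap inequality $\lambda_i > \mu_j$) is preserved under passing back and forth. This is where one must use that admissibility of $X\in\fq$ is defined via $G_0$-conjugacy to an admissible element of $\fa$, together with the fact, implicit in the preceding discussion and in Lemma~\ref{l:l21}, that the relevant spectral data (the splitting of eigenvalues into the "timelike" block $\{e^{\lambda_i}\}$ and the "spacelike" block $\{e^{\mu_j}\}$) is intrinsic to the element of $Q$ and varies continuously, so the open condition $\lambda_i > \mu_j$ transfers cleanly through $\exp$ and $\log$. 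Once that identification is set up, everything else is the standard functional-calculus and $\operatorname{ad}$-eigenvalue argument above.
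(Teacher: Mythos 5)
Your proof is correct, and it fills in exactly the standard argument the paper omits: the paper simply declares this lemma ``straightforward'' and gives no proof at all. Surjectivity by definition of $Q_{\rm adm}$, injectivity via uniqueness of the real-spectrum logarithm for a diagonalizable matrix with positive real eigenvalues, and invertibility of $d\exp_X$ because $\operatorname{ad}X$ has only real eigenvalues is precisely the intended reasoning.
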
 

Now consider the symmetrization map: 
$$
{\rm Sym}:\ AN\to Q, \ \ Y\mapsto Y^{\dagger}Y \ .
$$ 
This map sends the orbits of the dressing action to the conjugation orbits by the action of $G_0$ and 
therefore maps admissible elements to admissible. In fact, on the set of admissible elements, this map
is, again, bijective, and, moreover, a simple computation of the differential can show that this 
is a diffeomorphism. 

Let, as before, $\vec{\lambda}\in\fa$ be admissible, let $g_0\in G_0$, and let us find an 
explicit decomposition
$$
\exp(\vec{\lambda})g_0 = g_0'an
$$ 
with $g_0'\in G_0$, $a\in A$, and $n\in N$. Notice that the symmetrization map yields: 
$$
g_0^{-1}\exp(2\vec{\lambda})g_0 = n^{\dagger}a^2 n\ .
$$
The right hand side provides the Gauss decomposition 
(also known as the triangular or the LDV-factorization) of the left 
hand side. It exists if and only if the leading principal minors of the matrix on the left are non-zero. 
However, it was shown in \cite[Proposition 4.1]{Fot}, that the eigenvalues of the principal minors of 
an admissible element satisfy certain interlacing conditions, similar to the Gelfand-Tsetlin conditions 
in the Hermitian symmetric case. In particular, since all the eigenvalues of $s\in Q_{\rm adm}$ are positive,
then the eigenvalues of any leading principal minor would be positive as well, and therefore the Gauss 
decomposition exists. 

The diagonal entries of $a^2$ are the ratios of the leading principal minors:

$$
(a^2)_{ii} = \frac{\Delta_i}{\Delta_{i-1}}\ .
$$  

Thus we also see that the entries of $a^2$ are positive, as required. Thus, we have established:

\begin{proposition}
If ${\vec{\lambda}}\in \fa$ is admissible, then the whole orbit $\exp({\vec{\lambda}})G_0$ admits a global decomposition, i.e. $\exp({\vec{\lambda}})G_0 \subset G_0AN$.
\end{proposition}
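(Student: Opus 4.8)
The plan is to establish the factorization $\exp(\vec\lambda)g_0 = g_0' a n$ by reducing it, via the symmetrization map, to the existence of a Gauss (LDV) decomposition of the admissible element $s = g_0^{-1}\exp(2\vec\lambda)g_0 \in Q_{\rm adm}$, and then extract $g_0'$ from $s$. First I would set $s = g_0^{-1}\exp(2\vec\lambda)g_0$ and observe that $s^\dagger = s$ and that $s$ is admissible in $Q$. The key structural input is that the Gauss decomposition $s = n^\dagger a^2 n$ with $n \in N$ and $a \in A$ (real positive diagonal) exists precisely when all leading principal minors $\Delta_i$ of $s$ are nonzero; and then $(a^2)_{ii} = \Delta_i/\Delta_{i-1}$.

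The heart of the argument — and the step I expect to be the main obstacle — is showing that the leading principal minors of an admissible $s \in Q$ are all strictly positive (in particular nonzero, so the Gauss decomposition exists, and with positive ratios, so $a$ is genuinely real positive). For this I would invoke \cite[Proposition 4.1]{Fot}: the eigenvalues of the leading principal submatrices of an admissible element satisfy interlacing (Gelfand--Tsetlin-type) inequalities relative to the eigenvalues $e^{2\lambda_1},\dots,e^{2\lambda_p},e^{2\mu_1},\dots,e^{2\mu_q}$ of $s$ itself. Since all of these are positive real numbers, the interlacing forces every eigenvalue of every leading principal submatrix to be a positive real number, whence each $\Delta_i$, being the product of those eigenvalues, is strictly positive. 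The delicate point here is that $s$ need not be Hermitian in the usual sense (only $\dagger$-symmetric), so one cannot appeal to the classical Cauchy interlacing theorem directly; this is exactly why the cited result about the pseudo-Hermitian analogue is needed, and the admissibility hypothesis is what makes the spectra real and the interlacing meaningful. Alternatively, if one wanted a self-contained argument for $\Delta_i \ne 0$, one could use Lemma \ref{l:l21}: the $i$-th leading principal minor vanishes iff the first $i$ coordinate functionals fail to separate a certain $i$-dimensional subspace, which can be ruled out for $i \le p$ using that $s$ maps timelike vectors to timelike vectors; but handling $i > p$ still seems to require the interlacing input.

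Once the Gauss decomposition $s = n^\dagger a^2 n$ is in hand with $n \in N$ and $a \in A$, I would finish as follows. We have $g_0^{-1}\exp(2\vec\lambda)g_0 = n^\dagger a^2 n$, i.e. $(\exp(\vec\lambda)g_0)^\dagger (\exp(\vec\lambda)g_0) = (an)^\dagger(an)$. Therefore $g_0' := \exp(\vec\lambda)g_0 (an)^{-1}$ satisfies $(g_0')^\dagger g_0' = I$, which is exactly the defining condition for $g_0' \in {\rm SU}(p,q) = G_0$ (together with $\det g_0' = 1$, which holds since $\det \exp(\vec\lambda) = 1 = \det(an)$). Hence $\exp(\vec\lambda)g_0 = g_0' a n$ with $g_0' \in G_0$, $a \in A$, $n \in N$, which is the desired decomposition, and since $g_0 \in G_0$ was arbitrary this shows $\exp(\vec\lambda)G_0 \subset G_0AN$. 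I would remark in passing that this gives an alternative, shorter route to the conclusion reached in the previous section by the explicit Gram--Schmidt computation, and that the uniqueness of the Gauss decomposition translates into the well-definedness of the dressing action on admissible elements.
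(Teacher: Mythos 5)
Your proposal is correct and follows essentially the same route as the paper: apply the symmetrization map to reduce to the Gauss (LDV) decomposition of the admissible element $g_0^{-1}\exp(2\vec\lambda)g_0$, invoke the interlacing result of \cite[Proposition 4.1]{Fot} to conclude that all leading principal minors are positive, and read off the positivity of $a^2$ from the ratios $\Delta_i/\Delta_{i-1}$. The only addition is your explicit verification that $g_0' = \exp(\vec\lambda)g_0(an)^{-1}$ lies in $G_0$, which the paper leaves implicit.
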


Another interesting property of $(AN)_{\rm adm}$ is that it is a multiplicative set:

\begin{proposition}
If two elements $b_1$ and $b_2$ from $AN$ are admissible, $b_1, b_2\in (AN)_{\rm adm}$, then their
product $b_1b_2$ is admissible as well.
\end{proposition}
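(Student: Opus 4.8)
The plan is to push the statement through the symmetrization map and, via the geometric criterion, reduce it to a bookkeeping exercise with timelike cones. Write $C\subset\C^n$ for the open cone $\{||{\bf x}||^2>0\}$ of timelike vectors and $\overline C$ for its closure. Two ingredients from the earlier part do the work. (i) The computation inside the proof of Lemma \ref{l:l21} shows not merely that an admissible $s\in Q$ sends $C$ into $C$, but --- feeding in a nonzero ${\bf x}\in\overline C$ and noting that even when $||{\bf x}||^2=0$ the estimate survives, since then $\sum_i(e^{2\lambda_i}-e^{2\mu_1})|z_i|^2>0$ because every $\lambda_i>\mu_1$ --- that it sends $\overline C\setminus\{0\}$ into $C$. (ii) By Proposition \ref{p:p661}, an element $b\in AN$ is admissible if and only if ${\rm Sym}(b)$ maps $\overline C\setminus\{0\}$ into $C$.

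First I would record the shape of an admissible element of $AN$: $b$ is admissible exactly when $b=kag$ for some $k,g\in G_0$ and some admissible $a\in A$, since $b=a_0^g$ unwinds to $a_0g=g'b$ with $g'\in G_0$, i.e.\ $b=(g')^{-1}a_0g$. Now use ${\rm Sym}(b_1b_2)=(b_1b_2)^\dagger(b_1b_2)=b_2^\dagger\,{\rm Sym}(b_1)\,b_2$, the fact (recorded in the text) that ${\rm Sym}(b_1)\in Q_{\rm adm}$, say ${\rm Sym}(b_1)=h^{-1}\exp(\vec\mu)h$ with $h\in G_0$ and $\exp(\vec\mu)\in A_{\rm adm}$, the decomposition $b_2=kag$, and the identities $k^\dagger=k^{-1}$, $g^\dagger=g^{-1}$ on $G_0$ and $a^\dagger=a$ on $A$. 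One obtains
$$
{\rm Sym}(b_1b_2)=g^{-1}\,a\,k^{-1}\,h^{-1}\,\exp(\vec\mu)\,h\,k\,a\,g,
$$
a product every factor of which lies either in $G_0$ or in the set $A_{\rm adm}$ of admissible elements of $A$, with at least one factor in $A_{\rm adm}$.

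The conclusion now comes out by composing mapping properties. An element of $G_0$ preserves $\langle\cdot,\cdot\rangle$, hence carries $\overline C\setminus\{0\}$ into itself and $C$ into itself; an admissible element of $A$ carries $\overline C\setminus\{0\}$ into $C$ by ingredient (i). Applying the factors of the displayed product to a nonzero ${\bf x}\in\overline C$, from right to left, the $G_0$-factors keep us inside $\overline C\setminus\{0\}$, and from the moment the first $A_{\rm adm}$-factor acts we stay inside $C$. Hence ${\rm Sym}(b_1b_2)$ maps $\overline C\setminus\{0\}$ into $C$, and by Proposition \ref{p:p661} the product $b_1b_2$ is admissible.

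I expect the only step that is not formal bookkeeping to be the closing appeal to Proposition \ref{p:p661}: a purely group-theoretic attempt --- collapsing a word of the form $G_0A_{\rm adm}G_0A_{\rm adm}G_0$ into one of the form $G_0A_{\rm adm}G_0$ --- turns out to be essentially equivalent to the statement being proved, so the geometry cannot be bypassed there. The one small technical point to watch is ingredient (i), i.e.\ the extension of the Lemma \ref{l:l21} estimate to the boundary of the cone: that an admissible element of $A$ sends nonzero null vectors strictly into the timelike cone. Everything else is routine.
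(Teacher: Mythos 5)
Your reduction and cone bookkeeping are sound and closely parallel the paper's ingredients: the paper also reduces (via the dressing identities) to the case $b_2=a\in A_{\rm adm}$, also applies ${\rm Sym}$, and also uses exactly your ingredient (i), namely that admissible elements send the punctured closed cone $\overline{C}\setminus\{0\}$ strictly into $C$. But the decisive step of your argument --- passing from ``${\rm Sym}(b_1b_2)$ maps $\overline{C}\setminus\{0\}$ into $C$'' back to ``$b_1b_2$ is admissible'' --- is a genuine gap. First, you misquote Proposition \ref{p:p661}: it requires \emph{both} that $s$ have real positive eigenvalues \emph{and} that it preserve the cone, and the eigenvalue hypothesis is not redundant (for $n$ even, $-s$ with $s$ admissible lies in $Q\cap{\rm SL}(n,\C)$ and preserves $C$, since $C$ is invariant under scalars, yet is not admissible; and for $b\in AN$ the matrix $b^\dagger b$ need not have real spectrum at all, as the ${\rm SU}(1,1)$ computation with $r=1$, $|n|^2=2$ shows). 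You never verify the spectral condition for ${\rm Sym}(b_1b_2)$. Second, and more structurally, the ``if'' direction of Proposition \ref{p:p661} is precisely what the paper \emph{derives from} the proof of the multiplicativity proposition (``in geometric terms, we have established\dots''); it is not available beforehand, so invoking it here is circular within the paper's logic. You sense this yourself when you remark that ``the geometry cannot be bypassed there,'' but you do not supply the missing geometry.

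The missing idea is the paper's connectedness/boundary argument, which converts your cone estimate into admissibility without ever invoking the characterization. After reducing to $f=a\,b_1^\dagger b_1\,a$, choose a path $a(t)$ in $A_{\rm adm}$ from the identity to $a$ and set $f(t)=a(t)b_1^\dagger b_1 a(t)$; then $f(0)=b_1^\dagger b_1$ is admissible, and if $f(1)$ were not, there would be a first time $\tau$ at which $f$ reaches the boundary of $Q_{\rm adm}$. There the relevant eigenvalues collide, $\lambda_p=\mu_1=\beta$, forcing an eigenvector ${\bf z}$ of $f(\tau)$ on the nullcone; but $f(\tau)=a(\tau)(b_1^\dagger b_1)a(\tau)$ sends $\C^n_0\setminus\{0\}$ into the open cone $\C^n_+$ by your ingredient (i), so no such eigenvector can exist. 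This is where the openness of $Q_{\rm adm}$ and the behaviour of its boundary carry the load that you assigned to Proposition \ref{p:p661}. With that argument inserted in place of the appeal to the characterization, your write-up becomes essentially the paper's proof.
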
  

\begin{proof}
Applying the dressing action, we can actually assume that $b_2=a\in A_{\rm adm}$. This follows from 
the fact that for $g\in G_0$ and $b_1, b_2\in AN$:
$$
g^{b_1b_2}=(g^{b_2})^{b_1} \ \ \ {\rm and} \ \ \ 
(b_1b_2)^g = b_1^{(g^{b_2})}b_2^g\ .
$$
Applying the symmetrization map to $b_1a$, we obtain a new matrix $f=ab_1^{\dagger}b_1a\in Q$, which 
we need to prove admissible. 

Since the space $A_{\rm adm}$ is clearly connected and $a$ is admissible, consider a path $a(t)$ 
such that $a(0)={\rm Id}$, $a(1)=a$ and $a(t)\in A_{\rm adm}$ for $0<t\leq 1$. Also denote 
$f(t)=a(t)b_1^{\dagger}b_1a(t)$. 

Assume, on the contrary, that $f(1)=f$ is not admissible and let ${\mathcal{E}}\subset [0,1]$ be the 
(non-empty) subset defined by the property that $f(t)$ for $t\in{\mathcal{E}}$ is not admissible. 
Then consider $\tau=\inf{\mathcal{E}}$. 

We recall \cite{Neeb} that the space $\fq_{\rm adm}$ forms a convex cone in $q$, therefore a simple infinitesimal computation can show that $Q_{\rm adm}$ is a connected open subset of $Q$ and 
any element from the boundary of $Q_{\rm adm}$, such as  
$f(\tau)$, has the property that its eigenvalues remain real and, moreover, the 
lowest eigenvalue $\lambda_p$ corresponding to the timelike cone and the highest eigenvalue $\mu_1$ corresponding 
to the spacelike cone collide: $\lambda_p=\mu_1=\beta$. Thus, there exists a whole plane of eigenvectors
containing vectors from both $\C^n_+$ and $\C^n_-$. Thus it must contain an eigenvector, which we denote by 
${\bf z}$, from the nullcone. 

For this eigenvector we have the following equation: $f(\tau){\bf z}=\beta{\bf z}$. Or, equivalently,
$$
a(\tau)(b_1^{\dagger}b_1)a(\tau){\bf z}=\beta{\bf z}\ .
$$
Note that both $a(\tau)$ and $b_1^{\dagger}b_1$, being admissible, map the timelike cone $\C^n_+$ into itself
by Lemma \ref{l:l21}. Similar proof shows that they map the nullcone minus the origin 
$\C^n_0\setminus\{ 0\}$ inside the timelike cone. Therefore, 
the element $a(\tau)(b_1^{\dagger}b_1)a(\tau)$ also maps $\C^n_0\setminus\{ 0\}$ inside of $\C^n_+$ and 
thus ${\bf z}$ cannot be its eigenvector. \end{proof}

In geometric terms, we have established the following characterization of the space of admissible elements 
$Q_{\rm adm}$: 

\begin{proposition}
An element $s\in Q$ is admissible if and only if it has real positive
eigenvalues and maps the closure of the 
timelike cone $\overline{\C^n_+}=\C^n_+\cup\C^n_0$ into the timelike cone $\C^n_+$ (plus the origin).
\label{p:p661}
\end{proposition}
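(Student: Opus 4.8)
The plan is to prove both directions of the equivalence, with the forward direction being essentially Lemma~\ref{l:l21} together with a remark about positivity of eigenvalues, and the reverse direction requiring the real work. So first I would dispatch the easy implication: if $s\in Q$ is admissible, then by definition $s=g^{-1}e^{\vec\lambda}g$ with $g\in G_0$ and $\vec\lambda$ admissible, so the eigenvalues of $s$ are $e^{\lambda_i}, e^{\mu_j}$, which are real and positive; and Lemma~\ref{l:l21} shows $s$ maps $\C^n_+$ into itself, while the same computation (the displayed inequality in the proof of Lemma~\ref{l:l21} is strict even when $r=0$, as long as not all $z_i$ vanish, and $g$ sends a nonzero nullvector to a nonzero nullvector) shows $s$ maps $\C^n_0\setminus\{0\}$ into $\C^n_+$. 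Hence $s$ maps $\overline{\C^n_+}$ into $\C^n_+\cup\{0\}$.

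For the converse, suppose $s\in Q$ has real positive eigenvalues and maps $\overline{\C^n_+}$ into $\C^n_+\cup\{0\}$. The idea is to run exactly the degeneration/continuity argument already used in the proof of the multiplicative-set proposition, but now with $s$ itself. Since $s\in Q$ (so $s^\dagger=s$) and $s$ has real eigenvalues, one knows $s$ is $G_0$-conjugate to a real diagonal matrix $\vec\mu\in\fa$ with entries $e^{\nu_1}\ge\cdots$, split into a ``timelike block'' of size $p$ and a ``spacelike block'' of size $q$ according to the signature of the corresponding eigenspaces (here I would invoke the structure theory of $\fq$ / the $\dagger$-selfadjoint operators, as used implicitly in the paper). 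The element is admissible precisely when the lowest timelike eigenvalue strictly exceeds the highest spacelike eigenvalue. If $s$ were \emph{not} admissible, then either these two eigenvalues coincide or the ``timelike'' one is smaller; in the boundary/degenerate case $\lambda_p=\mu_1=\beta$ there is a two-dimensional eigenspace meeting both $\C^n_+$ and $\C^n_-$, hence containing a nonzero nullvector $\mathbf z$ with $s\mathbf z=\beta\mathbf z$, contradicting that $s$ sends $\C^n_0\setminus\{0\}$ into $\C^n_+$. The strict-inequality case ($\lambda_p<\mu_1$) reduces to the boundary case by a homotopy: scale $s$ by an admissible path or, more directly, connect $s$ to an admissible element inside the (convex, by \cite{Neeb}) cone picture and take the infimum of the bad parameter set, exactly as in the previous proof, landing on a boundary element that still maps $\overline{\C^n_+}$ into $\C^n_+\cup\{0\}$ (this property is closed under limits in the relevant sense because it can be rephrased as: $\langle s\mathbf z, s\mathbf z\rangle>0$ for $\mathbf z\in\overline{\C^n_+}\setminus\{0\}$, and the only way to fail in the limit is to acquire a nullvector eigenvector).

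Concretely I would organize the converse as: (1) reduce to $s=e^{\vec\mu}$ diagonal by $G_0$-conjugation, noting conjugation by $G_0$ preserves both the eigenvalue hypothesis and the cone hypothesis; (2) observe that failure of admissibility means, after reordering by $W_0$, that $\nu_p\le\nu_{p+1}$ where the first $p$ slots are the timelike block; (3) if $\nu_p=\nu_{p+1}$, exhibit the nullvector $\mathbf z=\mathbf e_p+\mathbf e_{p+1}$ (or a suitable combination), compute $s\mathbf z$, and check $\langle s\mathbf z,s\mathbf z\rangle=0$, contradicting the hypothesis; (4) if $\nu_p<\nu_{p+1}$, consider the path $s_t=\operatorname{diag}(e^{t\nu_1},\ldots)$ or better a path within $A_{\rm adm}$ times $s$, let $\tau$ be the infimum of parameters where the cone-preservation fails or where admissibility fails, and argue $s_\tau$ lands on the boundary case, reducing to (3). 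The main obstacle is step (4): making precise that ``maps $\overline{\C^n_+}$ into $\C^n_+\cup\{0\}$'' is the correct closed condition along the degeneration and that the infimum element is genuinely on the boundary of $Q_{\rm adm}$ with colliding extreme eigenvalues — but this is exactly the infinitesimal/boundary analysis the paper already invoked for the multiplicative proposition (the convexity of $\fq_{\rm adm}$ from \cite{Neeb} guarantees the boundary is cut out by the eigenvalue collision), so I would cite that argument rather than redo it. A cleaner alternative for step (4), which I would mention, is purely algebraic: if $\nu_p<\nu_{p+1}$ we do not even need a contradiction via nullvectors directly — instead note that admissibility of $e^{\vec\mu}$ is equivalent to admissibility of $e^{t\vec\mu}$ for all $t>0$ only in one direction, so the honest route really is the continuity argument, and that is where the subtlety lies.
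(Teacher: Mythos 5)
Your forward direction coincides with the paper's: Lemma \ref{l:l21} plus the remark, made inside the proof of the multiplicativity proposition, that the same strict inequality sends $\C^n_0\setminus\{0\}$ into $\C^n_+$. The paper offers no separate proof of the converse (the proposition is announced as already ``established''), so the converse is where your proposal must stand on its own, and there it has two genuine problems. First, step (1) is not a routine reduction: an element $s\in Q$ with real positive eigenvalues need \emph{not} be $G_0$-conjugate to a diagonal matrix, because $\dagger$-self-adjoint operators for an indefinite form can have Jordan blocks with real eigenvalues (for $p=q=1$, $s=\left(\begin{smallmatrix}2&1\\-1&0\end{smallmatrix}\right)$ lies in $Q$, has eigenvalue $1$ twice, and is not diagonalizable), and even a diagonalizable $s$ could a priori have a degenerate eigenspace. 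These cases must be excluded \emph{using the cone hypothesis} --- a Jordan block of size at least $2$, or a degenerate eigenspace, forces a nonzero null eigenvector ${\bf z}$ with $s{\bf z}=\lambda{\bf z}\in\C^n_0$, contradicting the hypothesis --- and only then can you conjugate $s$ into $\exp(\fa)$. You flag this step but do not supply it; it is the one place where the hypothesis genuinely earns the diagonalization, so it cannot be waved through.

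Second, step (4) as described would fail. If $\nu_p<\nu_{p+1}$, the path $s_t=\mathrm{diag}(e^{t\nu_1},\dots,e^{t\nu_n})$ is non-admissible for \emph{every} $t>0$, so the infimum of the bad parameter set is $t=0$ and the limiting element is the identity, which maps nullvectors to nullvectors and yields no contradiction; there is no first exit from $Q_{\rm adm}$ to analyze because the path never enters it. Fortunately no continuity argument is needed at all: once $s$ is diagonal with $\nu_p\le\nu_{p+1}$, the single nullvector ${\bf z}={\bf e}_p+{\bf e}_{p+1}$ settles both cases, since $||s{\bf z}||^2=e^{2\nu_p}-e^{2\nu_{p+1}}\le 0$, so $s{\bf z}\notin\C^n_+\cup\{0\}$. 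Your step (3) is exactly this computation in the equality case; it covers the strict case verbatim, and your unresolved ``cleaner alternative'' paragraph should simply be replaced by it. With those two repairs the converse goes through and is in fact more self-contained than the paper's implicit argument, which leans on the boundary analysis of the multiplicativity proof.
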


Notice that since $a$ from the above proof is admissible 
diagonal, the pseudo-hermitian analogue of the Rayleigh-Ritz ratio 
$$
{\mathcal{R}}_A = \frac{{\bf x}^\dagger A{\bf x}}{{\bf x}^\dagger {\bf x}}
$$
for the matrices $f$ and $s=b_1^{\dagger}b_1$ will have the following properties. 
Let ${\bf x}\in\C^n_+$ be timelike, then, as we saw earlier, $a{\bf x}$ is timelike as well
with a bigger norm by Lemma \ref{l:l21}. 

Thus, for ${\bf x}\in\C^n_+$ we have:
$$
{\mathcal{R}}_{asa}({\bf x})  = \frac{{\bf x}^{\dagger}asa{\bf x}}{{\bf x}^\dagger{\bf x}} >
\frac{(a{\bf x})^{\dagger}s(a{\bf x})^\dagger}{(a{\bf x})^{\dagger}a{\bf x}}={\mathcal{R}}_s(a{\bf x})\ .
$$
It follows since $a$ maps $\C^n_+$ to itself, that 
$$
\min_{{\bf x}\in\C^n_+}{\mathcal{R}}_s({\bf x}) \le \min_{{\bf x}\in\C^n_+}{\mathcal{R}}_s(a{\bf x})
< \min_{{\bf x}\in\C^n_+}{\mathcal{R}}_{asa}({\bf x}) 
$$
From \cite[Theorem 2.1]{F2} it follows that $\lambda_1(asa) > \lambda_1(s)$. 
Similar inequalities can be established for other eigenvalues.

The notion of admissibility can be extended to the whole group $G={\rm SL}(n, \C)$. 
We say that $g\in G$ is admissible, if it decomposes as $g=hb$, with $h\in G_0$ and an admissible 
$b\in (AN)_{\rm adm}$. Such a decomposition, if exists, is clearly unique. Note, moreover, that 
$g^{\dagger}g= b^{\dagger}b$. The singular admissible spectrum of $g$ are the square roots of the 
spectrum of $g^{\dagger}g$. (Note of warning: one should not define $Q_{\rm adm}$ by inducing
this definition from $G$, but rather as we did previously.)

Suppose that $g_1$, $g_2$ are two admissible elements from $G$ such that $g_1=h_1b_1$ and 
$g_2=h_2b_2$. Then we have $g_1g_2=h_1b_1h_2b_2=h_1h_2^{b_1}b_1^{h_2}b_2$, where the powers mean the
dressing actions. This is possible, because $b_1$ is admissible. Thus we have established that the
product $g_1g_2$ is admissible if and only if the product $b_1^{h_2}b_2$ is such.

\section{Example of the group SU(1,1)}

The Poisson geometry related to the group ${\rm SU}(1,1)$ was considered in detail in \cite{FM}. Here
we just recall several facts to illustrate the results of this paper. 
First of all, the space $\fq_{adm}$ is the following convex cone of matrices 
$$
\left( \begin{array}{cc}
z &  x+iy\\
-x+iy &  - z
\end{array} \right)\ : \ \ x,y,z\in\R, \ \ \ z^2-x^2-y^2 > 0 ,\ \ z > 0. 
$$
Next, we define $Q_{\rm adm}=\exp(\fq_{\rm adm})$. Consider an element 
$$
\left( \begin{array}{cc}
t_1 &  m\\
-{\bar m}  &  t_2
\end{array} \right) \ \ \in \ \ Q\ ,
$$
where $t_1,t_2\in\R$, and $m\in\C$ satisfy the determinant condition $t_1t_2+|m|^2=1$. It is admissible if and only 
if its eigenvalues are real and positive, and, moreover, the eigenvalue for the timelike cone 
is greater than the other one. 
This translates into the following two conditions on the coefficients of this matrix:
$$
t_1+t_2 > 2 \ \ \ {\rm and} \ \ \ t_1 > 1\ .
$$
Next, an element 
$$
\left( \begin{array}{cc} r & n  \\ 0 & r^{-1} \end{array} \right) \ \in \ AN
$$ 
is admissible if and only if 
$$r>1 \ \ \  {\rm and} \ \ \ r^2+r^{-2}-|n|^2>2\ .\
$$ 

Now, let us write down the Iwasawa-type decomposition $G\supset G_0AN$ explicitly. 
Let us consider a general element of ${\rm SL}(2, \C)$:
$$
\left( \begin{array}{cc}
a &  b\\
c & d
\end{array} \right) \ :\ \ \ a,b,c,d,\in\C, \ \ ad-bc=1\ .
$$
The condition that this general element admits such a decomposition is simply $|a|>|c|$:
$$
\left( \begin{array}{cc}
a &  b\\
{} & {} \\
c & d
\end{array} \right)
=
\left( \begin{array}{cc}
\frac{a}{\sqrt{|a|^2-|c|^2}} &  \frac{\bar c}{\sqrt{|a|^2-|c|^2}}\\
{} & {} \\
\frac{c}{\sqrt{|a|^2-|c|^2}} & \frac{\bar a}{\sqrt{|a|^2-|c|^2}}
\end{array} \right)
\cdot
\left( \begin{array}{ccc}
\sqrt{|a|^2-|c|^2} & {} & \left( b-\frac{\bar c}{|a|^2-|c|^2} \right) \frac{\sqrt{|a|^2-|c|^2}}{a}\\
{} & {} & {} \\ 
0 & {} & \frac{1}{\sqrt{|a|^2-|c|^2}}
\end{array} \right)
$$
Thus, it is quite easy to see that for any element $g\in G$ and $\exp({\vec{\lambda}})\in A_{\rm adm}$, 
the element $\exp({\vec{\lambda}})g\in G_0AN$. Explicitly, if 
$$
g=
\left( \begin{array}{cc}
u & v \\
{\bar v} & {\bar u}
\end{array} \right)\ , \ \ {\rm and} \  \ 
\exp({\vec{\lambda}}) = 
\left( \begin{array}{cc}
e^{\lambda} & 0 \\ 0 & e^{-\lambda}
\end{array}\right)\ , \ \ 
\lambda>0\ ,
$$
then
$$ 
\exp({\vec{\lambda}})g=
\left( \begin{array}{cc}
e^{\lambda}u & e^{\lambda}v \\
e^{-\lambda}{\bar v} & e^{-\lambda}{\bar u}
\end{array} \right)
$$
is an admissible element of $G$. 

The statemement that the admissibility of two elements $b_1$ and $b_2$ of $AN\subset {\rm SL}(2, \C)$ implies the admissibility of their product is also a short computational affair.


\begin{thebibliography}{AAAA}

\bibitem{Fot}{P. Foth. {\it Polygons in Minkowski space and the Gelfand-Tsetlin method 
for pseudo-unitary groups.} J. Geom. Phys., {\bf 58}: 825--832, 2008.}

\bibitem{F2}{P. Foth {\it Eigenvalues of sums of pseudo-Hermitian matrices.} 
El. J. Lin. Algebra, {\bf 20}: 115--125, 2010.}

\bibitem{FM}{P. Foth and M. Lamb. {\it The Poisson geometry of SU(1,1).}
J. Math. Phys., {\bf 51}, 2010.}

\bibitem{FL}{P. Foth and J.-H. Lu. {\it Poisson structures on complex flag manifolds 
associated with real forms.} Trans. Amer. Math. Soc., {\bf 358}: 1705--1714, 2006.}  

\bibitem{Neeb}{K.-H. Neeb. {\it Holomorphy and convexity in Lie Theory.} 
De Gruyter expositions in Mathematics, {\bf 28}, Walter de Gruyter \& Co.
Berlin, 2000.}

\bibitem{Vil3}{N. Ja. Vilenkin and A. U. Klimyk. 
{\it Representation Of Lie Groups And Special Functions.} Vol. 3.
Kluwer Academic Publishers Group, 1992.}

\end{thebibliography}
\end{document}